\title{Polynomial Valuations}
\newtheorem{theorem}{Theorem}[section]
\newtheorem{lemma}[theorem]{Lemma}
\newtheorem{proposition}[theorem]{Proposition}
\newtheorem{corollary}[theorem]{Corollary}
\begin{document}
\title[Polynomial Valuations on Vector Lattices]{Polynomial Valuations on Vector Lattices}

\author{Gerard Buskes}
\address{Department of Mathematics, University of Mississippi}
\email{mmbuskes@olemiss.edu}

\author{Stephan Roberts}
\address{Department of Mathematics, University of Mississippi}
\email{scrober2@olemiss.edu}

\subjclass{Primary 46A40,46G25; Secondary 47B60, 47B65}
\keywords{Vector lattice, polynomial, valuation, orthogonally additive, orthosymmetric}
\dedicatory{Dedicated to the memory of Professor W. A. J. Luxemburg.}

\begin{abstract}
We prove that polynomial valuations on vector lattices correspond to orthosymmetric multilinear maps. As a consequence we obtain a concise proof of the equivalence of orthosymmetry and orthogonal additivity. 
\end{abstract}
\maketitle
\section{Introduction}
We use $E$,$E_1$,...,$E_n$ and $F$ to denote Archimedean vector lattices and $V$ to denote a vector space. An s-linear map $T\colon E \times ... \times E \to V$ is called \textit{orthosymmetric} if $T(x_1,...,x_s) = 0$ for all $x_1,...,x_s \in E$ for which there exist $i \neq j$ in $\{1,...,s\}$ such that $x_i \bot x_j$ (see \cite{boulabiar_vector_2006}). An s-homogeneous polynomial $P \colon E \to F$ is called \textit{orthogonally additive} if $P(x+y) = P(x) + P(y)$ whenever $x \bot y$ (see \cite{sundaresan_geometry_1991}).

The equivalence between orthosymmetric multilinear maps on vector lattices and orthogonally additive polynomials has been well studied.
The tools used to arrive at this result up to now have always included either integral representations for orthogonally additive polynomials, as  in \cite{bu_polynomials_2012} and \cite{kusraeva_representation_2011}, or notions of partitionally orthosymmetric maps, as in \cite{ben_amor_orthogonally_2015}, \cite{linares_polynomials_2009}, and \cite{loane_polynomials_2010}. In this paper we present a better tool to connect orthogonal additivity with orthosymmetry: polynomial valuations. 

We prove a straightforward correspondence between polynomial valuations and orthosymmetric multilinear maps (Theorem \ref{big equivalence with vector space range}). For good measure, we employ Hammerstein polynomials as an intermediary. Second, we use the correspondence between polynomial valuations and orthosymmetric maps to link orthogonal additivity to orthosymmetry (Theorem \ref{equivalent properties for bounded polynomials}). 

\section{Main Results}

The approach to the first result of this paper (Theorem \ref{big equivalence with vector space range}) is inspired by \cite{ercan_towards_1998}, whose authors  considered arbitrary mappings between vector lattices that satisfy the Hammerstein property. We instead consider polynomials. Let $P \colon E \to V$ be an s-homogeneous polynomial. The unique symmetric s-linear map $T \colon E \times ... \times E \to V$ for which $P(x) = T(x,...,x)$ is denoted by $\check{P}$. We say $P$ has the \textit{Hammerstein property} if $P(x+y+z)-P(x+z) = P(y+z)-P(z)$ for all $x,y,z \in E$ for which $x \bot y$. We call $P$ a \textit{polynomial valuation} if $P(x) + P(y) = P(x \wedge y) + P(x \vee y)$ for all $x,y \in E$.

We will use the following lemma to prove a correspondence between polynomials with the Hammerstein property and orthosymmetric multilinear maps.
\begin{lemma}\label{+HS iff OS lemma}
Let $s \geq 2$. Let $P \colon E \to F$ be an s-homogeneous polynomial. If $P|_{E^+}$ has the Hammerstein property, then
\[ \sum_{\delta_i = 0,1} (-1)^{s-\sum_{i=1}^s \delta_i}  P\left( x+\sum_{i=1}^s \delta_i x_i \right) =0\] 
for all $x,x_1,...,x_s \in E^+$ for which  $x_i \bot x_j$ for some $i \neq j$ in $\{1,...,s\}$.
\end{lemma}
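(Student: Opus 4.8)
The plan is to recognize the alternating sum in the statement as an $s$-fold iterated first difference of $P$ and then to strip off the two difference operators attached to an orthogonal pair of the $x_i$. For $y \in E$ let $\Delta_y$ denote the operator sending a function $Q \colon E \to F$ to $(\Delta_y Q)(x) := Q(x+y) - Q(x)$. A routine induction on $s$ gives
\[ (\Delta_{x_1}\Delta_{x_2}\cdots\Delta_{x_s} P)(x) \;=\; \sum_{\delta_i = 0,1} (-1)^{\,s - \sum_{i=1}^s \delta_i}\, P\!\left(x + \sum_{i=1}^s \delta_i x_i\right), \]
so the quantity to be shown to vanish is precisely $(\Delta_{x_1}\cdots\Delta_{x_s}P)(x)$. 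Because this sum is visibly invariant under permuting $(x_1,\dots,x_s)$ (equivalently, the operators $\Delta_{x_i}$ pairwise commute), after relabelling I may assume the orthogonal pair is $x_{s-1} \bot x_s$. It therefore suffices to prove $(\Delta_{x_1}\cdots\Delta_{x_{s-2}}\,\Delta_{x_{s-1}}\Delta_{x_s}P)(x) = 0$.

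The key step is the observation that, on $E^+$, the Hammerstein property of $P$ says exactly that $\Delta_a\Delta_b P$ is the zero function whenever $a, b \in E^+$ and $a \bot b$. Indeed, $(\Delta_a\Delta_b P)(c) = \bigl(P(a+b+c) - P(a+c)\bigr) - \bigl(P(b+c) - P(c)\bigr)$, and for $a, b, c \in E^+$ with $a \bot b$ this is $0$ by the defining identity $P(a+b+c) - P(a+c) = P(b+c) - P(c)$ (all four arguments $c, a+c, b+c, a+b+c$ belonging to $E^+$). Applying this with $a = x_{s-1}$ and $b = x_s$, I conclude that $G := \Delta_{x_{s-1}}\Delta_{x_s}P$ vanishes identically on $E^+$.

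Finally I would expand the remaining operators: $(\Delta_{x_1}\cdots\Delta_{x_{s-2}}G)(x)$ is the signed sum over $\eta \in \{0,1\}^{s-2}$ of the values $G\!\left(x + \sum_{i=1}^{s-2}\eta_i x_i\right)$. Each such argument is a sum of finitely many elements of $E^+$, hence lies in $E^+$, so every term is a value of $G$ on $E^+$ and is therefore $0$. Consequently $(\Delta_{x_1}\cdots\Delta_{x_s}P)(x) = 0$, which is the asserted identity.

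I do not expect a genuine obstacle here: the content is the translation of the Hammerstein property into the vanishing of the second difference $\Delta_a\Delta_b P$, after which everything reduces to the commutativity of difference operators and one expansion. The only points needing care are bookkeeping — verifying the sign $(-1)^{s-\sum\delta_i}$ in the difference-operator identity by induction, and checking at each use of the Hammerstein identity that all arguments stay in $E^+$, which they do since $x, x_1, \dots, x_s \in E^+$ and we only ever form sums of them. Note that homogeneity of $P$ plays no role in this argument; it is needed only afterwards, when passing to $\check{P}$.
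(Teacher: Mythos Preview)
Your proof is correct and is essentially the same argument as the paper's, recast in the language of difference operators: the paper proceeds by induction on $s$, splitting off $\delta_{s+1}$ to write the $(s+1)$-sum as the difference of two $s$-sums with base points $x+x_{s+1}$ and $x$, which is exactly your step $\Delta_{x_{s+1}}(\Delta_{x_1}\cdots\Delta_{x_s}P)(x)$, and the base case $s=2$ is your observation that $\Delta_a\Delta_b P$ vanishes on $E^+$ when $a\bot b$. Your operator formulation makes the structure a little more transparent, but the underlying induction and the single use of the Hammerstein identity are identical.
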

\begin{proof}
If $s=2$, then the conclusion follows from the Hammerstein property. Assume the lemma holds for some $s \geq 2$. We prove that the lemma holds for $s+1$. We may assume $x_1 \bot x_2$. By distinguishing between $\delta_{s+1} = 1$ and $\delta_{s+1}=0$, respectively, we obtain
\begin{align*}
&\sum_{\substack{\delta_i = 0,1}} (-1)^{s+1 -\sum_{i=1}^{s+1} \delta_i}  P\left( x+\sum_{i=1}^{s+1} \delta_i x_i \right)=   \\
= &\sum_{\substack{\delta_i = 0,1  }} (-1)^{s-\sum_{i=1}^{s} \delta_i}  P\left( x_{s+1} + x + \sum_{i=1}^{s} \delta_i x_i \right) - \sum_{\substack{\delta_i = 0,1 \\}} (-1)^{s -\sum_{i=1}^s \delta_i}  P\left( x+\sum_{i=1}^{s} \delta_i x_i \right)=0
\end{align*}
where the final equality follows from applying the induction hypothesis to the above pair of sums. \end{proof}

\noindent We will now prove the connection between polynomial valuations and orthosymmetric multilinear maps. 

\begin{theorem}\label{big equivalence with vector space range}
Let $s \geq 2$ and let $P\colon E \to V$ be an s-homogeneous polynomial. The following are equivalent.
\begin{enumerate}
\item $P$ is a valuation.
\item $P|_{E^+}$ is a valuation.
\item $P$  has the Hammerstein property.
\item $P|_{E^+}$ has the Hammerstein property.
\item $\check{P}$ is orthosymmetric.
\item $\check{P}|_{E^+ \times ... \times E^+}$ is orthosymmetric.
\end{enumerate}
\end{theorem}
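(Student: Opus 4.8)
The plan is to prove the cycle of implications $(1)\Rightarrow(2)\Rightarrow(4)\Rightarrow(6)\Rightarrow(5)\Rightarrow(3)\Rightarrow(1)$, with Lemma~\ref{+HS iff OS lemma} together with the polarization formula carrying the only substantial step, namely $(4)\Rightarrow(6)$. The implication $(1)\Rightarrow(2)$ is a triviality, since $(2)$ is literally the restriction of the defining identity in $(1)$ to $E^+$.

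For $(2)\Rightarrow(4)$ and $(3)\Rightarrow(1)$ I would use that the valuation identity and the Hammerstein identity are one and the same family of equations written in two coordinate systems, linked by the translation invariance of $\wedge$ and $\vee$. Concretely, if $a,b\in E^+$ with $a\perp b$ and $c\in E^+$, then putting $x=a+c$, $y=b+c$ gives $x\wedge y=(a\wedge b)+c=c$ and $x\vee y=(a\vee b)+c=a+b+c$, so the valuation identity for the pair $x,y$ is exactly the Hammerstein identity for $a,b,c$; this yields $(2)\Rightarrow(4)$. Conversely, for arbitrary $x,y\in E$, set $c=x\wedge y$, $a=x-c\ge 0$, $b=y-c\ge 0$; then $a\wedge b=0$, so $a\perp b$, and feeding $a,b,c$ into the Hammerstein identity of $(3)$ (which permits $c$ to be an arbitrary element of $E$) together with $a+b+c=x\vee y$ recovers the valuation identity for $x,y$, giving $(3)\Rightarrow(1)$. (Note that this argument does not give $(1)\Rightarrow(3)$ directly, since the $a,b$ produced from a Hammerstein triple need not be positive, which is why the cycle is routed through $(4),(6),(5)$.)

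For $(4)\Rightarrow(6)$, I would apply Lemma~\ref{+HS iff OS lemma} with $x=0$: for $x_1,\dots,x_s\in E^+$ with $x_i\perp x_j$ for some $i\ne j$ this gives $\sum_{\delta_i=0,1}(-1)^{s-\sum_{i=1}^s\delta_i}P\bigl(\sum_{i=1}^s\delta_i x_i\bigr)=0$, and since the standard polarization identity reads
\[ \check{P}(x_1,\dots,x_s)=\frac{1}{s!}\sum_{\delta_i=0,1}(-1)^{s-\sum_{i=1}^s\delta_i}\,P\Bigl(\sum_{i=1}^s\delta_i x_i\Bigr), \]
this says precisely that $\check{P}(x_1,\dots,x_s)=0$, which is $(6)$. (One should record that although Lemma~\ref{+HS iff OS lemma} is phrased for an $F$-valued polynomial, its proof uses only the vector space structure of the range and so applies verbatim to $P\colon E\to V$.) For $(6)\Rightarrow(5)$, write each argument as $x_k=x_k^+-x_k^-$ and expand $\check{P}(x_1,\dots,x_s)$ by multilinearity into an alternating sum of terms $\check{P}(u_1,\dots,u_s)$ with each $u_k\in\{x_k^+,x_k^-\}\subseteq E^+$; if $x_i\perp x_j$, i.e.\ $|x_i|\wedge|x_j|=0$, then $0\le u_i\wedge u_j\le|x_i|\wedge|x_j|=0$ for every choice, so each term vanishes by $(6)$.

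Finally, for $(5)\Rightarrow(3)$, given $x,y\in E$ with $x\perp y$ and arbitrary $z\in E$, expand by the trinomial form of the polynomial formula,
\[ P(a+b+c)=\sum_{i+j+k=s}\frac{s!}{i!\,j!\,k!}\,\check{P}\bigl(\,\underbrace{a,\dots,a}_{i},\underbrace{b,\dots,b}_{j},\underbrace{c,\dots,c}_{k}\,\bigr), \]
and check, by inclusion--exclusion, that $P(x+y+z)-P(x+z)-P(y+z)+P(z)$ equals the partial sum over the indices with $i\ge 1$ and $j\ge 1$ (the full sum is $P(x+y+z)$, the $i=0$ terms sum to $P(y+z)$, the $j=0$ terms to $P(x+z)$, and the $i=j=0$ term is $P(z)$); every surviving summand has both an $x$-slot and a $y$-slot, hence is $0$ by orthosymmetry of $\check{P}$, establishing the Hammerstein property. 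The only obstacle I anticipate is purely combinatorial bookkeeping — verifying the cancellation pattern in $(5)\Rightarrow(3)$ and matching the factor $1/s!$ in polarization against the alternating sum of Lemma~\ref{+HS iff OS lemma} — rather than anything conceptual; once the cycle above is set up the argument is short.
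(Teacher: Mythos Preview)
Your proposal is correct and follows the same cycle $(1)\Rightarrow(2)\Rightarrow(4)\Rightarrow(6)\Rightarrow(5)\Rightarrow(3)\Rightarrow(1)$ with the same substitutions as the paper; in particular your $(2)\Rightarrow(4)$ and $(3)\Rightarrow(1)$ are exactly the paper's change of variables $u=x+z$, $v=y+z$ and $u=(x-y)^+$, $v=(x-y)^-$, $w=x\wedge y$ in different notation, and your $(4)\Rightarrow(6)$ is the paper's use of Lemma~\ref{+HS iff OS lemma} plus polarization. The only cosmetic differences are that for $(5)\Rightarrow(3)$ you use the trinomial expansion with inclusion--exclusion whereas the paper iterates the binomial expansion (grouping $y+z$ first), and for $(6)\Rightarrow(5)$ you decompose all $s$ arguments into positive and negative parts rather than just the two disjoint ones---which is in fact the cleaner thing to do, since $(6)$ requires every slot to lie in $E^+$.
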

\begin{proof} $ $ \newline 
\noindent$(1) \Rightarrow (2)$ This implication is trivial.

\noindent$(2) \Rightarrow (4)$  Let $x,y,z \in E^+$ with $x \bot y$. If $u=x+z$ and if $v=y+z$ then
\[P(x+y+z) - P(x+z) - P(y+z) +P(z) 
= P(u \wedge v) - P(u) - P(v)  + P(u \vee v) = 0.\]

\noindent$(4) \Rightarrow (6)$ Let $x_1,...,x_s \in E^+$ with $x_i \bot x_j$ for some $i\neq j$ in $\{1,...,s\}$. Since $\check{P}$ is symmetric, we may assume $i=1$ and $j=2$. Applying the Mazur-Orlicz polarization formula (see (22) in \cite{mazur_grundlegende_1934}) and Lemma \ref{+HS iff OS lemma}, respectively, yields
\[ \check{P}(x_1,...,x_s) = \frac{1}{s!} \sum_{\delta_i = 0,1} (-1)^{s - \sum_{i=1}^s \delta_i}P\left(\sum_{i=1}^s \delta_i x_i \right) =  0. \]
Therefore, $\check{P}|_{E^+}$ is orthosymmetric.

\noindent$(6) \Rightarrow (5)$  
Let $x_1,...,x_s \in E$ with $x_i \bot x_j$ for some $i \neq j$ in $\{1,...,s\}$.  Since $\check{P}$ is symmetric, we may assume $i=1$ and $j=2$.  The orthosymmetry of $\check{P}|_{E^+ \times ... \times E^+}$ implies
\[ \check{P}(x_1,x_2,...,x_s) = \check{P}(x_1^+,x_2^+,...,x_s) - \check{P}(x_1^+,x_2^-,...,x_s) -\check{P}(x_1^-,x_2^+,...,x_s)+\check{P}(x_1^-,x_2^-,...,x_s) =0.\]
Thus, $\check{P}$ is orthosymmetric.

\noindent$(5) \Rightarrow (3)$  Let $x,y,z \in E$ with $x \bot y$. Then the binomial theorem and orthosymmetry of $P$ imply
\begin{align*}
P(x + y + z) - P(y+z) 
=& \left( \sum_{k=0}^s {{s}\choose{k}} \check{P}(\underbrace{x,...,x}_{\text{k times}},\underbrace{y+z,...,y+z}_{\text{s-k times}}) \right) -P(y+z)\\
=& \sum_{k=1}^s {{s}\choose{k}} \check{P}(\underbrace{x,...,x}_{\text{k times}},\underbrace{y+z,...,y+z}_{\text{s-k times}})\\
= &  \left( \sum_{k=1}^s {{s}\choose{k}} \check{P}(\underbrace{x,...,x}_{\text{k times}},\underbrace{z,...,z}_{\text{s-k times}})\right) +P(z)-P(z)\\
=& \left( \sum_{k=0}^s {{s}\choose{k}} \check{P}(\underbrace{x,...,x}_{\text{k times}},\underbrace{z,...,z}_{\text{s-k times}})\right)- P(z) \\
 =&P(x+z)-P(z)
\end{align*}
\noindent$(3) \Rightarrow (1)$ Combining the comment at the bottom of page 67 of \cite{ercan_towards_1998} with the proof of Corollary 2.3 of \cite{ercan_towards_1998} proves this implication.  For convenience and specificity, we copy the authors' argument. \newpage \noindent Let $x,y \in E$. Set $u = (x-y)^+$, $v=(x-y)^-$, and $w= x \wedge y$. The Hammerstein property yields\[P(x) + P(y) = P(u + w) + P(v + w) = P(u + v + w) + P(w) = P(x \vee y) + P(x \wedge y).\] 
\end{proof}

We will prove in Proposition \ref{oa polynomials are a band} that certain s-homogeneous polynomials have a valuational component. To this end, we consider multilinear maps and s-homogeneous polynomials of order bounded variation. For $x \in E^+$, a \textit{partition} of $x$ is a finite sequence of elements of $E^+$ whose sum is equal to $x$. The set of all partitions of $x$ is denoted by $\Pi x$. We write $a$ to abbreviate the partition $(a_{1},...,a_{n})$ of $x$. An s-homogeneous polynomial $P\colon E \to F$ is \textit{of order bounded variation} if \[ \left\{ \sum_k |P(a_k)| : a \in \Pi x \right\}\text{ }(x \in E^+)\] is order bounded (see \cite{buskes_roberts_arensextensions}). An s-linear map $T\colon E_1 \times ... \times E_s \to F$ is \textit{of order bounded variation} if 
\[ \left\{ \sum_{k_1,..,k_s} | T(a_{1,k_{1}},...,a_{s,k_{s}})|   :   a_1 \in \Pi x_1,..., a_s \in \Pi x_s \right\} \text{ }  (x_i \in E_i^+, i \in \{1,...,s\})\] is order bounded (see \cite{buskes_bounded_2003}). 

We denote the set of all s-linear maps of order bounded variation $E \times ... \times E \to F$ by $\mathcal{L}^{obv}(^s E;F)$ and the set of all s-homogeneous polynomials of order bounded variation $E \to F$ by $\mathcal{P}^{obv} (^s E; F)$.  By Theorem 1.1 of \cite{buskes_bounded_2003}, we have that $\mathcal{L}^{obv}(^s E;F)$ is a Dedekind complete vector lattice whenever $F$ is Dedekind complete. In addition, the authors of this paper proved $\mathcal{P}^{obv} (^s E; F)$ is a Dedekind complete vector lattice (Theorem 2.2 of \cite{buskes_roberts_arensextensions}). 

We denote the set of all orthosymmetric s-linear maps of order bounded variation $E \times ... \times E \to F$ by ${\mathcal{L}_{os}(^s E; F)}$. Van Gaans derived an interesting formula for the orthosymmetric component of a positive bilinear map in \cite{van_gaans_riesz_2001}. The following theorem proves more.

\begin{proposition}\label{os maps are a band}
Let $s \geq 2$. If $F$ is Dedekind complete, then $\mathcal{L}_{os}(^s E;F)$ is a band of $\mathcal{L}^{obv}(^s E;F).$
\end{proposition}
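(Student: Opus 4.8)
The plan is to establish the three defining features of a band in turn: that $\mathcal{L}_{os}(^sE;F)$ is a linear subspace of $\mathcal{L}^{obv}(^sE;F)$, that it is an order ideal, and that it is order closed. The first is immediate, since a linear combination of orthosymmetric maps is orthosymmetric. Both remaining parts I would run through the modulus in $\mathcal{L}^{obv}(^sE;F)$: recall from \cite{buskes_bounded_2003} that this space, with positive cone the maps carrying positive tuples to positive elements, is a Dedekind complete vector lattice whose order is the pointwise order on $E^+ \times \cdots \times E^+$ and whose modulus is the variation formula
\[ |T|(x_1,\dots,x_s) = \sup\left\{ \textstyle\sum_{k_1,\dots,k_s} |T(a_{1,k_1},\dots,a_{s,k_s})| : a_i \in \Pi x_i \right\} \qquad (x_1,\dots,x_s \in E^+). \]

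The technical heart is a characterization of orthosymmetry via the modulus: for $T \in \mathcal{L}^{obv}(^sE;F)$, the map $T$ is orthosymmetric if and only if $|T|(x_1,\dots,x_s) = 0$ whenever $x_1,\dots,x_s \in E^+$ and $x_i \bot x_j$ for some $i \ne j$. For the forward implication, take such a positive tuple with, say, $x_1 \bot x_2$; for every choice of partitions $a_i \in \Pi x_i$ and indices $k_1,\dots,k_s$ we have $0 \le a_{1,k_1} \le x_1$ and $0 \le a_{2,k_2} \le x_2$, whence $a_{1,k_1} \bot a_{2,k_2}$ by heredity of disjointness, so $T(a_{1,k_1},\dots,a_{s,k_s}) = 0$ and the variation set in the formula above reduces to $\{0\}$. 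For the converse I would use the $s$-linear analogue of $|T(y_1,\dots,y_s)| \le |T|(|y_1|,\dots,|y_s|)$ (which follows from $|T| = T^+ + T^-$ together with multilinearity, exactly as in the bilinear case): if $y_i \bot y_j$ then $|y_i| \bot |y_j|$, so the right-hand side vanishes and $T(y_1,\dots,y_s) = 0$; since orthosymmetry is precisely this condition over all of $E$, $T$ is orthosymmetric.

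Granting the characterization, the ideal property is short: if $T \in \mathcal{L}_{os}(^sE;F)$ and $S \in \mathcal{L}^{obv}(^sE;F)$ with $|S| \le |T|$, then for any positive tuple with a disjoint pair we get $0 \le |S|(x_1,\dots,x_s) \le |T|(x_1,\dots,x_s) = 0$, so $S$ is orthosymmetric. For order closedness I would invoke the standard criterion that an order ideal is a band precisely when it contains the supremum of each of its upward directed, order bounded subsets; so let $0 \le T_\alpha \uparrow T$ in $\mathcal{L}^{obv}(^sE;F)$ with each $T_\alpha$ orthosymmetric. Using that such directed suprema in $\mathcal{L}^{obv}(^sE;F)$ are computed pointwise on $E^+ \times \cdots \times E^+$ (the Riesz--Kantorovich-type formula for this space, again from \cite{buskes_bounded_2003}), we get $T(x_1,\dots,x_s) = \sup_\alpha T_\alpha(x_1,\dots,x_s) = 0$ for every positive tuple with a disjoint pair, and since $T = |T|$ the characterization yields $T \in \mathcal{L}_{os}(^sE;F)$.

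The main obstacle is not the combinatorics but pinning down the lattice structure of $\mathcal{L}^{obv}(^sE;F)$ precisely enough: that its modulus is the variation formula above and that its order bounded directed suprema are pointwise on positive tuples. With those structural facts in hand (from \cite{buskes_bounded_2003}, and as developed alongside \cite{buskes_roberts_arensextensions}), the heredity of disjointness does essentially all the work and the rest is routine bookkeeping. A minor point to watch is the multilinear modulus inequality $|T(y_1,\dots,y_s)| \le |T|(|y_1|,\dots,|y_s|)$; since it is only ever used here when the right-hand side is $0$, even a non-sharp constant would suffice, so this should not cause real difficulty.
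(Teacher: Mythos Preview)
Your proposal is correct and follows essentially the same route as the paper: use the variation formula for $|T|$ from \cite{buskes_bounded_2003} together with heredity of disjointness to see that $|T|$ vanishes on positive tuples with a disjoint pair whenever $T$ is orthosymmetric (giving the ideal property), and then use pointwise convergence of directed suprema on $E^+\times\cdots\times E^+$ for the band closure. You are more explicit than the paper in isolating the two-way characterization ``$T$ orthosymmetric $\Leftrightarrow$ $|T|$ vanishes on positive tuples with a disjoint pair'' and in spelling out the modulus inequality $|T(y_1,\dots,y_s)|\le |T|(|y_1|,\dots,|y_s|)$ needed for its converse, but this is a difference of exposition rather than of method.
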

\begin{proof}  We first prove $\mathcal{L}_{os}(^s E;F)$ is a vector sublattice of $\mathcal{L}^{obv}(^s E;F)$.  Let $ T \in \mathcal{L}_{os}(^s E;F)$ and let $x_1,...,x_s \in E^+$ with  $x_i \bot x_j$ for some $i\neq j$ in $\{1,...,s\}$.  It follows from Theorem 1.1 of \cite{buskes_bounded_2003} that
\begin{align*}
|T|(x_1,...,x_s) &=  \sup \left\{ \sum_{k_1,...,k_n} |T(a_{1,k_1},...,a_{n,k_n})| : a_1 \in \Pi x_1,...,a_s \in \Pi x_s \right\} = 0.
\end{align*}
from which it follows that $T$ is orthosymmetric and that $\mathcal{L}_{os}(^s E;F)$ is an ideal of $\mathcal{L}^{obv}(^s E;F)$. 

Suppose $(T_\alpha)_{\alpha \in A}$ is a net in $\mathcal{L}_{os}(^s E;F)$ such that $T_\alpha \uparrow T$ in $\mathcal{L}^{obv}(^s E;F).$ Note that \\$ 0 = T_\alpha(x_1,...,x_s) \uparrow T(x_1,...,x_s)$. Therefore, $T$ is orthosymmetric. We conclude  $\mathcal{L}_{os}(^s E;F)$ is a band of $\mathcal{L}^{obv}(^s E;F)$.
\end{proof}

\noindent Ben-Amor proves the next corollary for a weaker domain but surprisingly stronger range (Theorem 6 of [2]).

\begin{corollary} \label{os maps are sym} Let $T \colon E \times ... \times E \to F$ be an s-linear map of order bounded variation. If $T$ is orthosymmetric then $T$ is symmetric.
\end{corollary}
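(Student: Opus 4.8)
The plan is to reduce, in three formal steps, to the classical fact that a positive orthosymmetric bilinear map between Archimedean vector lattices is symmetric, and then to quote that fact (or, for a self-contained account, to re-derive it). \emph{Step one}: arrange that $F$ is Dedekind complete. If it is not, replace $F$ by its Dedekind completion $F^{\delta}$; the composition of $T$ with $F \hookrightarrow F^{\delta}$ is again orthosymmetric and of order bounded variation, and ``$T$ is symmetric'' is a family of equalities holding in $F$ as soon as it holds in $F^{\delta}$. \emph{Step two}: reduce to $s=2$. It suffices to show that $T$ is invariant under each transposition of two adjacent coordinates, since these generate $S_{s}$; expanding every argument as $x_{i} = x_{i}^{+} - x_{i}^{-}$ and using multilinearity, it is enough to check this on positive arguments. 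Freezing the remaining $s-2$ positive arguments then yields a bilinear $B \colon E \times E \to F$ which is clearly orthosymmetric and which is of order bounded variation because $\sum_{k,l}|B(a_{k},b_{l})| = \sum_{k,l}|T(\dots,a_{k},\dots,b_{l},\dots)| \le |T|(\dots)$ by the formula of Theorem 1.1 of \cite{buskes_bounded_2003} (take trivial partitions in the frozen slots). So it remains to prove that an orthosymmetric $B \in \mathcal{L}^{obv}(^{2}E;F)$, with $F$ Dedekind complete, is symmetric.

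\emph{Step three} removes the positivity gap using Proposition \ref{os maps are a band}. Put $D := B - B^{\mathrm{sw}}$, where $B^{\mathrm{sw}}(x,y) := B(y,x)$; then $D \in \mathcal{L}^{obv}(^{2}E;F)$, $D$ is orthosymmetric, and $D$ is antisymmetric. By Proposition \ref{os maps are a band}, $\mathcal{L}_{os}(^{2}E;F)$ is a band, hence a vector sublattice, of $\mathcal{L}^{obv}(^{2}E;F)$, so $D^{+} = D \vee 0$ and $D^{-} = (-D) \vee 0$ are again orthosymmetric; they are, in addition, positive (and hence, automatically, of order bounded variation). Granting that every positive orthosymmetric bilinear map is symmetric, $D^{+}$ and $D^{-}$ are symmetric, so $D = D^{+} - D^{-}$ is symmetric; being also antisymmetric, $D = 0$, that is, $B$ is symmetric.

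This leaves exactly one non-formal point: the positive bilinear case. Here I would start from the decompositions $a = a \wedge b + (a-b)^{+}$ and $b = a \wedge b + (b-a)^{+}$ with $(a-b)^{+} \perp (b-a)^{+}$; for a positive orthosymmetric bilinear $S$ and $a,b \in E^{+}$, orthosymmetry forces $S(a,b) = S(a\wedge b, a\wedge b) + S((a-b)^{+}, a\wedge b) + S(a\wedge b, (b-a)^{+})$ and the symmetric counterpart for $S(b,a)$, so that $S(a,b) - S(b,a)$ is a difference of two ``commutators'' of $S$, each having $a \wedge b$ as one of its arguments. The main obstacle is that this algebraic identity does not by itself close the argument: to annihilate the surviving commutators one must genuinely use the positivity of $S$ together with an Archimedean / uniform-completeness argument (this is the commutativity theorem for almost $f$-algebras, which may instead simply be cited). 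Everything else — the passage to a Dedekind complete range, the reduction to bilinear maps, and the invocation of Proposition \ref{os maps are a band} — is routine bookkeeping.
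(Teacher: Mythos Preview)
Your argument is correct and follows essentially the same route as the paper: pass to a Dedekind complete range, use Proposition~\ref{os maps are a band} to obtain positive orthosymmetric pieces, and then invoke the known symmetry of positive orthosymmetric multilinear maps. The paper is simply more direct---it decomposes $T$ itself as $T_1 - T_2$ with $T_1, T_2 \in \mathcal{L}_{os}(^{s}E;F)^{+}$ and cites Proposition~2.1 of \cite{boulabiar_products_2003} for the full $s$-linear positive case, so your reduction to $s=2$ and your detour through the commutator $D = B - B^{\mathrm{sw}}$ are unnecessary (though harmless).
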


\begin{proof}
We may assume $F$ is Dedekind complete. By Theorem \ref{os maps are a band}, there exist $T_1,T_2 \in \mathcal{L}_{os}(^s E;F)^+$ such that $T= T_1 - T_2$.  In addition, Proposition 2.1 of \cite{boulabiar_products_2003} implies $T_1$ and $T_2$ are symmetric. We conclude $T$ is symmetric.
\end{proof}

\noindent For vector lattices $E$ and $F$, we denote the set of all s-homogeneous polynomial valuations of order bounded variation $E \to F$ by $\mathcal{P}_{val}(^s E; F)$.

\begin{proposition}\label{oa polynomials are a band}
If $F$ is Dedekind complete, then $\mathcal{P}_{val} (^s E; F)$ is a band of $\mathcal{P}^{obv} (^s E; F)$.
\end{proposition}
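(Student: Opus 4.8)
The plan is to mimic the structure of the proof of Proposition \ref{os maps are a band}, replacing multilinear maps by $s$-homogeneous polynomials and using Theorem \ref{big equivalence with vector space range} to translate between valuations and orthosymmetry of the associated symmetric $s$-linear map. First I would show that $\mathcal{P}_{val}(^s E;F)$ is an ideal of $\mathcal{P}^{obv}(^s E;F)$. It is clearly a linear subspace, since the valuation identity $P(x)+P(y)=P(x\wedge y)+P(x\vee y)$ is preserved under linear combinations. For the ideal property, suppose $0\leq |Q|\leq |P|$ in $\mathcal{P}^{obv}(^s E;F)$ with $P\in\mathcal{P}_{val}(^s E;F)$, and let $x_1,\dots,x_s\in E^+$ with $x_i\bot x_j$ for some $i\neq j$. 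By Theorem \ref{big equivalence with vector space range}, $\check{P}$ is orthosymmetric, so in particular $\check{P}(x_1,\dots,x_s)=0$. I would like to conclude $\check{Q}(x_1,\dots,x_s)=0$ from $|\check{Q}|\leq |\check{P}|$; the key input here is the description of the lattice operations in $\mathcal{P}^{obv}(^s E;F)$ from Theorem 2.2 of \cite{buskes_roberts_arensextensions}, analogous to the formula for $|T|$ used in Proposition \ref{os maps are a band}, which should express $|P|(x)$ as a supremum over partitions of $x$ of sums $\sum_k |P(a_k)|$. Feeding in a partition of $x = x_i$ that splits off the orthogonal pieces and using that $\check{P}(a_{k_1},\dots,a_{k_s}) = 0$ whenever two arguments come from orthogonal summands, one gets $|P|$ (hence $|Q|$, hence $Q$) vanishing appropriately, and then Lemma \ref{+HS iff OS lemma} together with the Mazur--Orlicz polarization formula (exactly as in the $(4)\Rightarrow(6)$ step of Theorem \ref{big equivalence with vector space range}) yields $\check{Q}(x_1,\dots,x_s)=0$. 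Thus $\check{Q}$ is orthosymmetric and $Q$ is a valuation by Theorem \ref{big equivalence with vector space range}.

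Next I would verify that $\mathcal{P}_{val}(^s E;F)$ is closed under suprema of nets that exist in $\mathcal{P}^{obv}(^s E;F)$. Suppose $(P_\alpha)_{\alpha\in A}$ is a net in $\mathcal{P}_{val}(^s E;F)$ with $P_\alpha\uparrow P$ in $\mathcal{P}^{obv}(^s E;F)$. For fixed $x,y\in E$, order convergence of $P_\alpha\uparrow P$ in the Dedekind complete vector lattice $\mathcal{P}^{obv}(^s E;F)$ should give pointwise order convergence $P_\alpha(z)\to P(z)$ for each $z\in E$ (again invoking Theorem 2.2 of \cite{buskes_roberts_arensextensions}, just as the multilinear case used Theorem 1.1 of \cite{buskes_bounded_2003}). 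Since each $P_\alpha$ satisfies $P_\alpha(x)+P_\alpha(y)=P_\alpha(x\wedge y)+P_\alpha(x\vee y)$, passing to the order limit on both sides gives the same identity for $P$, so $P\in\mathcal{P}_{val}(^s E;F)$. Combined with the ideal property, this shows $\mathcal{P}_{val}(^s E;F)$ is a band. As in Corollary \ref{os maps are sym}, the reduction to Dedekind complete $F$ is harmless because the valuation identity only involves finitely many elements at a time and $F$ embeds as a sublattice into a Dedekind completion; but since the statement already assumes $F$ Dedekind complete, no such reduction is needed here.

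The main obstacle I anticipate is making the ideal step fully rigorous: translating the inequality $|Q|\leq|P|$ in the polynomial lattice $\mathcal{P}^{obv}(^s E;F)$ into the vanishing of $\check{Q}$ on orthogonal arguments. The cleanest route is to first establish, via the partition-supremum formula for $|P|$, that $|P|$ itself is a valuation (equivalently, $\widecheck{|P|}$ is orthosymmetric) whenever $P$ is — this is the polynomial analogue of the computation $|T|(x_1,\dots,x_s)=0$ in Proposition \ref{os maps are a band}. Then $0\leq|Q|\leq|P|$ forces $|Q|$, and hence $\pm Q$, to be valuations on $E^+$ (the valuation identity on $E^+$ is an order-theoretic squeeze once one knows $|Q|$ vanishes where $|P|$ does on the relevant orthogonal expressions), and condition $(2)\Rightarrow(1)$ of Theorem \ref{big equivalence with vector space range} finishes it. I would want to double-check that the partition formula from \cite{buskes_roberts_arensextensions} indeed has the shape needed to kill cross terms coming from orthogonal summands; if it does, the rest is routine and parallels the bilinear argument closely.
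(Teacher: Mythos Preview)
Your approach is workable in outline but takes a much longer detour than the paper's proof, and the ideal step---exactly where you anticipate trouble---is not yet rigorous as written. The paper simply observes that the assignment $T\mapsto P_T$ (equivalently $P\mapsto\check P$) is a lattice isomorphism between the symmetric part of $\mathcal{L}^{obv}(^sE;F)$ and $\mathcal{P}^{obv}(^sE;F)$, and that by Theorem~\ref{big equivalence with vector space range} this isomorphism carries $\mathcal{L}_{os}(^sE;F)$ onto $\mathcal{P}_{val}(^sE;F)$. Since Proposition~\ref{os maps are a band} already shows $\mathcal{L}_{os}(^sE;F)$ is a band, its image under a lattice isomorphism is a band as well. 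That is the entire argument (plus the trivial case $s=1$).

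In your sketch of the ideal step you invoke the partition formula for $|P|$ but then reason about cross terms $\check P(a_{k_1},\dots,a_{k_s})$ ``whenever two arguments come from orthogonal summands.'' The polynomial partition formula $|P|(x)=\sup_{a\in\Pi x}\sum_k|P(a_k)|$ involves only diagonal values $P(a_k)=\check P(a_k,\dots,a_k)$; no such cross terms appear, so that line of reasoning conflates the polynomial formula with the multilinear one from Theorem~1.1 of \cite{buskes_bounded_2003}. Your fallback plan in the last paragraph---first show $|P|$ is a valuation, then squeeze---can be made to work (for positive $R$ the Hammerstein defect $R(x{+}y{+}z)-R(x{+}z)-R(y{+}z)+R(z)$ equals $\sum_{i,j\ge 1}\binom{s}{i,j,s-i-j}\check R(x^i,y^j,z^{s-i-j})\ge 0$, so $0\le Q\le P$ with $P$ a valuation forces $Q$ to be one), but establishing that $|P|$ itself is a valuation still requires knowing $\widecheck{|P|}=|\check P|$, i.e., that $P\mapsto\check P$ is a lattice isomorphism. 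Once you grant that, you have recovered the paper's one-line transfer argument, and the rest of your scaffolding becomes unnecessary.
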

\begin{proof}
If $s=1$, then $\mathcal{P}_{val}  (^s E; F) = \mathcal{L}^{b}(E; F)$.  Suppose $s \geq 2$. Theorem \ref{big equivalence with vector space range} implies $\mathcal{P}_{val}  (^s E; F) $ is the image of $\mathcal{L}_{os}(^s E;F)$ under the lattice isomorphism  $\check{P} \mapsto {P}$. It follows from Theorem \ref{os maps are a band} that $\mathcal{P}_{val} (^s E; F)$ is a band of  $\mathcal{P}^{obv} (^s E; F)$.
\end{proof}

Our next goal is to add orthogonal additivity to the list of equivalent properties in the statement of Theorem \ref{big equivalence with vector space range}. As a first step, we extend an orthogonally additive polynomial of order bounded variation on $E$ to the order continuous bidual of $E$.

We denote the order dual of $E$ by $E^\sim$. The \textit{Arens adjoint} of an s-linear map $T\colon E_1 \times \cdots \times E_s \to F$ is the map ${T^*}\colon F^\sim \times E_1 \times ... \times E_{s-1} \to E_s^\sim$ that is defined by
\[ T^*(f,x_1,...,x_{s-1})(x_s) = f(T(x_1,...,x_{s-1},x_s))\]
for all  $f \in F^\sim, x_1 \in E_1, ...,x_{s} \in E_{s}$ (see \cite{arens_operations_1951}). Let $T^{[1]*} := T^{*}$ and inductively define $T^{[k]*}$ by $T^{[k]*} = (T^{[k-1]*})^*$ ($k \geq 2$). For an s-homogeneous polynomial $P \colon E \to F$, we define $\bar{P}\colon E^{\sim \sim} \to F^{\sim\sim}$ by $\bar{P}(\psi) = \check P^{[s+1]*}(\psi,...,\psi)$ for all $\psi \in E^{\sim \sim}$. 

For $x \in E$, define ${\hat{x}} \in (E^\sim)^\sim_n$ by $\hat{x}(f) = f(x)$ $(f \in E^\sim)$. Let $\hat{E}:=\{ \hat{x} : x \in E\}$. We denote the order continuous component of $E^\sim$ by $E^\sim_n$. For a subset $S$ of $(E^\sim)^\sim_n$ define \[ {\mathcal{I}S}=\{x \in (E^\sim)^\sim_n : x_\alpha \uparrow x \text{ for some net } (x_\alpha)_{\alpha \in A} \text{ in } S \}\] and 
\[ {\mathcal{D}S}=\{x \in (E^\sim)^\sim_n : x_\alpha \downarrow x \text{ for some net } (x_\alpha)_{\alpha \in A} \text{ in } S \}.\] 

To denote the order convergence of $(x_\alpha)_{\alpha \in A}$ to $x$ we write $x_\alpha \rightarrow x$. An s-homogeneous polynomial $P\colon E \to F$ is called \textit{order continuous} if $P(x_\alpha) \to P(x)$ whenever $x_\alpha \to x$. We will use the following lemma to prove an orthogonally additive polynomial on $E^+$ is a polynomial valuation. 
\begin{lemma}\label{Arens OA Extension}
If $P\colon E \to F$ is an orthogonally additive s-homogeneous polynomial of order bounded variation, then $\bar{P}|_{(E^\sim)_n^{\sim+}}$ is orthogonally additive.
\end{lemma}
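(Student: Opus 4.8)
The plan is to reduce the statement about the polynomial $\bar P$ to a statement about the symmetric $s$-linear map $\check P$ and its iterated Arens adjoints, and then to exploit the order continuity that the Arens construction builds in, together with a density argument propagating orthogonal additivity through the layers $\hat E \subseteq \mathcal I \hat E \subseteq \mathcal{DI}\hat E \subseteq (E^\sim)_n^{\sim}$. First I would record that, since $P$ is of order bounded variation, $\check P$ is of order bounded variation (this is the multilinear/polynomial correspondence underlying Theorem \ref{os maps are a band} and Theorem 2.2 of \cite{buskes_roberts_arensextensions}), so each iterated Arens adjoint $\check P^{[k]*}$ is well defined and separately order continuous in its arguments; in particular $\check P^{[s+1]*}\colon (E^\sim)_n^{\sim} \times \cdots \times (E^\sim)_n^{\sim} \to F^{\sim\sim}$ is separately order continuous and symmetric, and hence $\bar P$ is an order continuous $s$-homogeneous polynomial on $(E^\sim)_n^{\sim}$ with $\widecheck{\bar P} = \check P^{[s+1]*}$.

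Next I would unwind what orthogonal additivity of $\bar P$ on $(E^\sim)_n^{\sim+}$ demands: given $\psi \bot \varphi$ in $(E^\sim)_n^{\sim+}$, expand $\bar P(\psi+\varphi)$ by the binomial theorem for symmetric $s$-linear maps, so that the claim is equivalent to $\check P^{[s+1]*}(\underbrace{\psi,\dots,\psi}_{k},\underbrace{\varphi,\dots,\varphi}_{s-k}) = 0$ for $1 \le k \le s-1$, i.e.\ to the \emph{orthosymmetry-like} vanishing of $\check P^{[s+1]*}$ on pairs of disjoint positive elements of the bidual. The strategy is then a three-step lifting. Step one: on $\hat E^+ = \{\hat x : x \in E^+\}$, the map $\check P^{[s+1]*}$ restricts (via the canonical embeddings) to $\check P$ itself, and disjointness in $\hat E$ corresponds to disjointness in $E$; since $P$ is orthogonally additive, $\check P$ vanishes on disjoint positive tuples by the binomial expansion of $P(x+y)=P(x)+P(y)$, giving the base case on $\hat E^+$. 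Step two: push this up to $\mathcal I \hat E^+$ and then to $\mathcal{DI}\hat E^+$ using separate order continuity of $\check P^{[s+1]*}$ — here one must check that if $\psi, \varphi$ are disjoint in $(E^\sim)_n^\sim$ and are order limits (increasing, then decreasing) of nets from $\hat E^+$, these nets can be chosen (by replacing $x_\alpha$ with $x_\alpha \wedge \psi$, etc., or truncating) so that the approximants are themselves disjoint, so the vanishing is preserved in the limit. Step three: invoke that $(E^\sim)_n^\sim = \mathcal{DI}\hat E = \mathcal{ID}\hat E$ (the standard order-density description of the order continuous bidual, as used throughout this circle of ideas), so that every $\psi \in (E^\sim)_n^{\sim+}$ is reached, and conclude.

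The main obstacle I anticipate is precisely Step two: ensuring that disjointness is preserved along the approximating nets. Order limits from $\hat E$ of two disjoint bidual elements need not \emph{a priori} be taken along disjoint nets, and $\check P^{[s+1]*}$ is only \emph{separately} order continuous, so one cannot move both arguments to the limit simultaneously without care. The fix is to move one argument at a time — first fix $\varphi \in \hat E^+$ and let a net in $\hat E^+$ increase to $\psi$, arranging disjointness with the fixed $\varphi$ by a lattice truncation in $E$ itself (possible since the net lives in $\hat E$); take the limit; then repeat for the second argument, now with $\psi$ in the appropriate layer fixed — and iterate this through the $\mathcal I$ and $\mathcal D$ constructions. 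A clean way to package this is to prove the auxiliary fact that the set of $\psi \in (E^\sim)_n^{\sim+}$ for which $\check P^{[s+1]*}(\psi,\dots,\psi,\varphi,\dots,\varphi)=0$ for every $\varphi$ disjoint from $\psi$ is closed under the operations $\mathcal I$ and $\mathcal D$ relative to disjointness-respecting nets, and contains $\hat E^+$; this isolates exactly the continuity bookkeeping and keeps the disjointness hypothesis in play at each stage. Once orthogonal additivity of $\bar P$ on the positive cone is in hand, the statement is proved.
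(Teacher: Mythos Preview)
Your overall architecture matches the paper's: lift orthogonal additivity from $\hat E$ through the Fremlin layers $\mathcal I\hat E$, $\mathcal{DI}\hat E$, \ldots\ to $(E^\sim)^\sim_n$, using order continuity of the Arens extension. The paper does this at the level of the polynomial $\bar P$ (invoking order continuity of $\bar P$ directly) rather than at the multilinear level, but that is packaging.

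The substantive gap is exactly where you flag it: the $\mathcal D$ step. Your proposed fix does not work. Writing ``replace $x_\alpha$ by $x_\alpha\wedge\psi$'' is ill-typed ($x_\alpha\in E$, $\psi\in (E^\sim)^\sim_n$), and if you move to $\hat x_\alpha\wedge\psi$ or apply a band projection in the Dedekind complete bidual to force disjointness with $\varphi$, the result will in general leave the layer $\mathcal I\hat E$, so you lose the inductive hypothesis. For decreasing nets $f_\delta\downarrow\psi$, $g_\gamma\downarrow\varphi$ with $\psi\bot\varphi$, there is simply no way to truncate inside the layer to make $f_\delta\bot g_\gamma$, and moving ``one argument at a time'' does not help because the obstruction is the same for a single argument.

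The paper sidesteps this entirely with the following device: for \emph{any} $a,b\ge 0$ one has $(a-b)^+\bot(b-a)^+$. So in the decreasing step one does not attempt to make the approximants $f_\delta,g_\gamma$ disjoint; instead one takes nets $\hat x_\alpha\uparrow f_\delta$, $\hat y_\beta\uparrow g_\gamma$ in $\hat E$, applies the already-established orthogonal additivity to the automatically disjoint pair $(\hat x_\alpha-\hat y_\beta)^+,\ (\hat y_\beta-\hat x_\alpha)^+\in\hat E^+$, and passes to the limit twice using order continuity of $\bar P$ and of the lattice operations. At the end, since $\phi\bot\psi$, one has $(\phi-\psi)^+=\phi$ and $(\psi-\phi)^+=\psi$, which is precisely what is needed. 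This trick is what your proposal is missing.

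Two smaller points. First, Fremlin's theorem gives $(E^\sim)^\sim_n=\mathcal{DIDI}\hat E$, not $\mathcal{DI}\hat E$, so the argument must be iterated once more (the paper's Step~3). Second, your base case needs a word more than ``the binomial expansion of $P(x+y)=P(x)+P(y)$'': that identity only gives $\sum_{k=1}^{s-1}\binom{s}{k}\check P(x^{(k)},y^{(s-k)})=0$, and one extracts the individual vanishing $\check P(x^{(k)},y^{(s-k)})=0$ by scaling $x\mapsto\lambda x$ and comparing coefficients.
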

\begin{proof}
For readability, we denote $\bar{P}|_{(E^\sim)_n^{\sim+}}$ by $\bar{P}$. We show that $\bar{P}$ is orthogonally additive in three steps. Let $\phi, \psi \in (E^\sim)^{\sim+}$ such that $\phi \bot \psi$. 
\newline
\textbf{Step 1.} Suppose $\phi, \psi \in (I\hat{E})^+$. By definition, there exist nets $(\hat{x}_\alpha)_{\alpha \in A}$ and $(\hat{y}_\beta)_{\beta \in B}$ in $\hat E$ such that $\hat{x}_\alpha \uparrow \phi$ and $\hat{y}_\beta \uparrow \psi$. Moreover, $\phi \bot \psi$ implies $x_\alpha \bot y_\beta$ for all $\alpha \in A$ and $\beta \in B$. Then it follows from the orthogonal additivity of $P$ that
\[ \bar{P}(\hat x_\alpha + \hat y_{\beta})\phantom{x} =  \bar{P}(\hat x_\alpha) + \bar{P}(\hat y_{\beta}) \phantom{x} (\alpha \in A, \beta \in B). \]
Since $\bar{P}$ is order continuous (Theorem 3.5 of \cite{buskes_roberts_arensextensions}) we have 
$\bar{P}(\phi + \psi) =\bar{P}(\phi) + \bar{P}(\psi). $ \\
\textbf{Step 2.} Suppose $\phi, \psi \in (DI\hat{E})^+$. Then there exist nets $(f_\delta)_{\delta \in D}$ and $(g_\gamma)_{\gamma \in G}$ in $I \hat{E}$ such that $f_\delta \downarrow \phi$ and $g_\gamma \downarrow \psi$. Fix $\delta$ and $\gamma$. Since $f_\delta, g_\gamma \in I \hat{E}$, there exist nets $(\hat{x}_\alpha)_{\alpha \in A}$ and $(\hat{y}_\beta)_{\beta \in B}$ in $\hat{E}$ such that $\hat{x}_{\alpha} \uparrow f_\delta$ and $\hat{y}_{\beta} \uparrow  g_\lambda$. From the first step,
\[ \bar{P}((\hat{x}_{\alpha} - \hat{y}_{\beta})^+) + (\hat{y}_{\beta} - \hat{x}_{\alpha})^+ ) =\bar{P}((\hat{x}_{\alpha} - \hat{y}_{\beta})^+) + \bar{P}((\hat{y}_{\beta} - \hat{x}_{\alpha})^+ ). \]
Applying the order continuity of $\bar{P}$ yields
\[ \bar{P}((f_\delta - g_\gamma)^+ + (g_\gamma - f_\delta)^+ ) = \bar{P}((f_\delta - g_\gamma)^+) + \bar{P}((g_\gamma - f_\delta)^+ ) \]
and
\[ \bar{P}((\phi-\psi)^+ + (\psi - \phi)^+) = \bar{P}((\phi-\psi)^+) + \bar{P}((\psi - \phi)^+).\]
Also, $(\phi - \psi)^+ =\phi - (\psi \wedge \phi)= \phi$ and $(\psi - \phi)^+ = \psi - (\phi \wedge \psi) = \psi$. Thus $\bar{P}(\phi + \psi) = \bar{P}(\phi) + \bar{P}(\psi)$. \newpage
\noindent \textbf{Step 3.} Repeating the arguments from each of the previous steps implies
$\bar{P}(\phi + \psi) = \bar{P}(\phi) + \bar{P}(\psi)$
for all $\phi, \psi \in (DIDI\hat{E})^+$ for which $\phi \bot \psi$. Since $DIDI\hat{E} = (E^\sim)^\sim_n$ (Theorem 13 of \cite{fremlin_abstract_1967}), we conclude $\bar{P}$ is orthogonally additive. 
\end{proof}

\noindent We are now ready to prove the correspondence between orthosymmetry and orthogonal addivity.

\begin{theorem}\label{equivalent properties for bounded polynomials}
Let $s \geq 2$ and let $P\colon E \to F$ be an s-homogeneous polynomial of order bounded variation. The following are equivalent.
\begin{enumerate}
\item $P$ is orthogonally additive
\item $P|_{E^+}$ is orthogonally additive
\item $\check{P}$ is orthosymmetric.
\end{enumerate}
\end{theorem}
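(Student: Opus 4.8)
The plan is to prove the cycle $(1)\Rightarrow(2)\Rightarrow(3)\Rightarrow(1)$. The implication $(1)\Rightarrow(2)$ is trivial, and $(3)\Rightarrow(1)$ is the computation already used for $(5)\Rightarrow(3)$ in Theorem \ref{big equivalence with vector space range}, specialised to $z=0$: if $x\bot y$, expand $P(x+y)=\check P(x+y,\dots,x+y)$ multilinearly and discard every term containing both an $x$ and a $y$, which vanishes since $\check P$ is orthosymmetric; what remains is $P(x)+P(y)$. Neither of these two implications uses order bounded variation; that hypothesis is needed only for $(2)\Rightarrow(3)$, where it permits the passage to the Arens extension.

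For $(2)\Rightarrow(3)$ the idea is to move the problem into the Dedekind complete vector lattice $G:=(E^\sim)^\sim_n$ --- which is Dedekind complete because it is a band in the order dual $(E^\sim)^\sim$ and which contains $\hat E$ as a vector sublattice --- solve it there, and transfer back. By Lemma \ref{Arens OA Extension}, $\bar P|_{G^+}$ is orthogonally additive. The heart of the argument is then the purely lattice-theoretic observation that in a Dedekind complete vector lattice an orthogonally additive $s$-homogeneous polynomial $Q$ on the positive cone automatically has the Hammerstein property there. To see this, take $x,y,z\in G^+$ with $x\bot y$, let $B_x$ and $B_y$ be the bands generated by $x$ and $y$ (projection bands, by Dedekind completeness), and decompose $z=z_x+z_y+z_0$, where $z_x$ is the component of $z$ in $B_x$, $z_y$ the component of $z-z_x$ in $B_y$, and $z_0$ the remainder. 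Then $z_x,z_y,z_0\geq 0$, and since $B_x\bot B_y$ the family $\{x+z_x,\,y+z_y,\,z_0\}$ is pairwise disjoint; feeding $x+y+z$, $x+z$, $y+z$, and $z$ --- each a sum of pairwise disjoint pieces --- into orthogonal additivity (extended from two to finitely many disjoint summands by an easy induction) and cancelling yields $Q(x+y+z)-Q(x+z)=Q(y+z)-Q(z)$.

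Applying this to $Q=\bar P|_G$ and invoking Theorem \ref{big equivalence with vector space range} (the implication $(4)\Rightarrow(1)$, with $G$ in the role of $E$ and $(F^\sim)^\sim$ in the role of $V$) shows $\bar P|_G$ is a valuation. Because $\hat{\cdot}\colon E\to G$ is a lattice isomorphism onto $\hat E$ and $\bar P(\hat x)=\widehat{P(x)}$ for every $x\in E$ --- a compatibility of the Arens extension established in \cite{buskes_roberts_arensextensions} --- the valuation identity for $\bar P$ on $\hat E$ carries back to $P$ on $E$, so $P$ is a valuation. A final application of Theorem \ref{big equivalence with vector space range} then gives that $\check P$ is orthosymmetric.

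The main obstacle is the band-decomposition step. Orthogonal additivity only constrains a pair of \emph{disjoint} summands, whereas the Hammerstein property involves the element $z$, which need not be disjoint from anything; the remedy is to split $z$ along the bands of $x$ and $y$, and this requires enough band projections --- exactly what $E$ itself may lack but what its order continuous bidual $G$ supplies. Once the decomposition $z=z_x+z_y+z_0$ is in place, verifying positivity and the disjointness relations and bookkeeping the cancellation is routine, so the load-bearing external inputs are Lemma \ref{Arens OA Extension} and the identity $\bar P(\hat x)=\widehat{P(x)}$.
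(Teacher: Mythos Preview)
Your proof is correct and the overall architecture---lift to the order-continuous bidual via Lemma~\ref{Arens OA Extension}, establish the valuation identity there, and pull back along $x\mapsto\hat x$---matches the paper. The genuine difference lies in how you pass from orthogonal additivity on the positive cone of the bidual to the valuation/Hammerstein property. The paper restricts to the sublattice $G$ generated by $\{x,y\}$ so that $(G^\sim)^\sim_n$ has a strong unit $e=\hat x+\hat y$, proves the valuation identity first for $e$-step functions using the scalar fact $\alpha^s+\beta^s=(\alpha\wedge\beta)^s+(\alpha\vee\beta)^s$, and then invokes Freudenthal's spectral theorem together with order continuity of $\bar P$ to reach $\hat x,\hat y$. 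You instead stay in the full bidual, exploit only its projection property, and obtain the Hammerstein identity directly by splitting $z$ along the principal bands $B_x$ and $B_y$; the cancellation you describe is exactly right.

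Your route is shorter and uses lighter machinery: it dispenses with the unit, Freudenthal, and the second appeal to order continuity, and it isolates a clean lattice-theoretic principle (orthogonal additivity on the positive cone of a space with the principal projection property implies the Hammerstein property there). The paper's step-function argument, on the other hand, makes the role of $s$-homogeneity more visible through the identity $\alpha^s+\beta^s=(\alpha\wedge\beta)^s+(\alpha\vee\beta)^s$. Both approaches rely on Lemma~\ref{Arens OA Extension} and on the compatibility $\bar P(\hat x)=\widehat{P(x)}$ for the transfer back, so the external load is the same.
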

\begin{proof} We only need to prove $(2) \Rightarrow (3)$ and $(3) \Rightarrow (1)$. To this end, suppose $P|_{E^+}$ is orthogonally additive. By Theorem \ref{big equivalence with vector space range}, it will suffice to prove that $P|_{E^+} $ is a valuation.  Let $x,y \in E^+$. Let $G$ be the vector lattice generated by $\{x,y\}$ in $E$. Note that $(G^\sim)_n^\sim$ is nontrivial and has a unit $e := \hat{x}+\hat{y}$.  We will denote $\bar{P}|_{(G^\sim)_n^{\sim+}}$ by $\bar{P}$. 

 Let $u,v \in (G^\sim)_n^{\sim+}$ be {e-step functions}. Note that there exist disjoint components $c_1,...,c_n$ of $e$ in $G$ and $\alpha_1,...,\alpha_n,\beta_1,...,\beta_n \in \mathbb{R}^+$ such that $u = \sum_{i=1}^n \alpha_i c_i$ and $v = \sum_{i=1}^n \beta_i c_i$.

\noindent Since $\bar{P}$  is orthogonally additive (Lemma \ref{Arens OA Extension}), we have
\begin{align*}
\bar{P}(u) + \bar{P}(v) = \bar{P}\left(\sum_{i=1}^n \alpha_i c_i\right) + \bar{P}\left(\sum_{i=1}^n \beta_i c_i\right)
= \sum_{i=1}^n  (\alpha_i^s  +    \beta_i^s) \bar{P}\left(  c_i\right). 
\end{align*}
If $\alpha, \beta \in \mathbb{R}$, then $\alpha^s + \beta^s = (\alpha \wedge \beta)^s + (\alpha \vee \beta)^s$. Thus,
\begin{align*}
\sum_{i=1}^n  (\alpha_i^s  +    \beta_i^s) \bar{P}\left(  c_i \right) =&\sum_{i=1}^n  ((\alpha_i \wedge \beta_i)^s  +    (\beta_i \vee \alpha_i)^s) \bar{P}\left(  c_i\right) 
= \bar{P}(u \wedge v) + \bar{P}(u \vee v).
\end{align*}
Hence, $\bar{P}$  restricted to the $e$-step functions of $(G^\sim)_n^{\sim}$ is a valuation.

By Freudenthal's spectral theorem (Theorem 40.2 of \cite{luxemburg_riesz_1971}), there exist  sequences $(u_n)$ and $(v_m)$ of positive \textbf{e}-step functions in $(G^\sim)_n^{\sim+}$ such that $u_n \uparrow \hat{x}$ and $v_m \uparrow \hat{y}$. In addition,
\begin{align*}
\sup_n  \left( \bar{P}({u_n}) +  \bar{P}({v_n}) \right) =   \sup_n \left(\bar{P}( {u_n} \vee {v_n}) + \bar{P}({u_n} \wedge {v_n}) \right).
\end{align*} 
Since $\bar{P}$ is order continuous (Theorem 3.5 of \cite{buskes_roberts_arensextensions}), the above identity implies
\[\bar{P}(\hat{x}) +  \bar{P}(\hat{y}) = \bar{P}( \hat{x} \vee \hat{y}) + \bar{P}(\hat{x} \wedge \hat{y}).\]
That is,
\[P(x) + P(y)= P(x \vee y) + P(x \wedge y). \]
We conclude $P$ is a valuation.

Suppose $\check{P}$ is orthosymmetric. Let $x,y \in E$ with $x \bot y$. Then the binomial theorem and orthosymmetry of $\check{P}$ imply
\[P(x+y) = \sum_{k=0}^s {{s}\choose{k}} \check{P}(\underbrace{x,...,x}_{\text{k times}},\underbrace{y,...,y}_{\text{s-k times}}) = {P}(x) + {P}(y). \]
Therefore $P$ is orthogonally additive.
\end{proof}

The following consequence of Proposition \ref{oa polynomials are a band} and Theorem \ref{equivalent properties for bounded polynomials} generalizes Theorem 6.2 of \cite{bu_polynomials_2012}.

\begin{corollary}
If $F$ is Dedekind complete, then the set of all orthogonally additive s-homogeneous polynomials of order bounded variation $E \to F$ is a Dedekind complete vector lattice.
\end{corollary}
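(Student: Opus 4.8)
The plan is to identify the set in the statement with the band $\mathcal{P}_{val}(^s E; F)$ of Proposition \ref{oa polynomials are a band} and then invoke the standard fact that an ideal---\emph{a fortiori} a band---of a Dedekind complete vector lattice is Dedekind complete. I would first dispose of the case $s = 1$: a linear map $P\colon E \to F$ is trivially orthogonally additive and satisfies $P(x) + P(y) = P(x + y) = P(x \wedge y) + P(x \vee y)$, so here the set of orthogonally additive $1$-homogeneous polynomials of order bounded variation is just $\mathcal{P}_{val}(^1 E; F) = \mathcal{L}^b(E; F)$, which is Dedekind complete because $F$ is.

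For $s \geq 2$ I would read off the identification from the two equivalence theorems. Theorem \ref{equivalent properties for bounded polynomials} gives that an s-homogeneous polynomial $P$ of order bounded variation is orthogonally additive if and only if $\check P$ is orthosymmetric, and Theorem \ref{big equivalence with vector space range} gives that $\check P$ is orthosymmetric if and only if $P$ is a valuation. Chaining the two, the set of all orthogonally additive s-homogeneous polynomials of order bounded variation $E \to F$ equals $\mathcal{P}_{val}(^s E; F)$. By Proposition \ref{oa polynomials are a band} this set is a band of $\mathcal{P}^{obv}(^s E; F)$, and by Theorem 2.2 of \cite{buskes_roberts_arensextensions} the latter is a Dedekind complete vector lattice.

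Finally I would recall why an ideal $I$ of a Dedekind complete vector lattice $L$ is itself Dedekind complete: a nonempty $S \subseteq I$ bounded above in $I$ is bounded above in $L$, so $\sup_L S$ exists; choosing $s_0 \in S$ and an upper bound $u \in I$ yields $s_0 \le \sup_L S \le u$ with $s_0, u \in I$, and order-convexity of $I$ forces $\sup_L S \in I$. Applying this to $\mathcal{P}_{val}(^s E; F) \subseteq \mathcal{P}^{obv}(^s E; F)$ completes the argument. I do not anticipate a genuine obstacle here---all the substance lives in the earlier results---so the only points to be careful about are bookkeeping the identification through Theorems \ref{big equivalence with vector space range} and \ref{equivalent properties for bounded polynomials} and handling $s = 1$ separately.
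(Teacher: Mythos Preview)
Your proposal is correct and follows exactly the route the paper indicates: the corollary is stated without proof as an immediate consequence of Proposition \ref{oa polynomials are a band} and Theorem \ref{equivalent properties for bounded polynomials}, and you have spelled out precisely that deduction (with the harmless extra step through Theorem \ref{big equivalence with vector space range} and the explicit handling of $s=1$).
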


\noindent We conclude with an application of Theorem \ref{equivalent properties for bounded polynomials} that implies Theorem 2.2 of \cite{bernau_order_1995}.

\begin{corollary} \label{orthsymnormalpart}
Let $s \geq 2$ and let  $T\colon E \times ... \times E \to F$ be an s-linear map of order bounded variation. If $T$ is orthosymmetric, then the restriction of $T^{[s+1]*}$ to $(E^\sim)^\sim_n \times ... \times (E^\sim)^\sim_n $ is orthosymmetric. 
\end{corollary}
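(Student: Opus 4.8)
The plan is to reduce the multilinear statement to the polynomial statement of Theorem \ref{equivalent properties for bounded polynomials} via polarization, exactly mirroring the passage between items (3) and (5)/(6) in Theorem \ref{big equivalence with vector space range}. First I would observe that, since $T$ is orthosymmetric and of order bounded variation, Corollary \ref{os maps are sym} gives that $T$ is symmetric; hence there is an $s$-homogeneous polynomial $P\colon E\to F$ with $\check P = T$, namely $P(x) = T(x,\dots,x)$. Moreover $P$ is of order bounded variation (this is essentially the definition, or can be read off from the membership $T\in\mathcal L^{obv}({}^sE;F)$). By the implication $(5)\Rightarrow(1)$ in Theorem \ref{equivalent properties for bounded polynomials} (orthosymmetry of $\check P$ forces orthogonal additivity of $P$), $P$ is orthogonally additive.

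Next I would pass to the Arens extension. Recall $\bar P\colon E^{\sim\sim}\to F^{\sim\sim}$ is defined by $\bar P(\psi)=\check P^{[s+1]*}(\psi,\dots,\psi)$, so that $\bar P = \widecheck{\phantom{x}}^{\,-1}$ of $T^{[s+1]*}$ in the sense that the symmetric $s$-linear map associated to $\bar P$ is (the symmetrization of) $T^{[s+1]*}$. Since $T$ is symmetric, so is each iterated Arens adjoint up to the usual bookkeeping, and in particular the restriction of $T^{[s+1]*}$ to $(E^\sim)^\sim_n\times\cdots\times(E^\sim)^\sim_n$ is the unique symmetric $s$-linear map $\check{\bar P}|_{(E^\sim)^\sim_n}$ attached to the $s$-homogeneous polynomial $\bar P|_{(E^\sim)^\sim_n}$. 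Now Lemma \ref{Arens OA Extension} applies: since $P$ is orthogonally additive of order bounded variation, $\bar P|_{(E^\sim)_n^{\sim+}}$ is orthogonally additive. Because $\bar P$ restricted to $(E^\sim)^\sim_n$ is again of order bounded variation (the iterated Arens adjoints preserve this, by the results of \cite{buskes_bounded_2003} and \cite{buskes_roberts_arensextensions} underlying the earlier propositions), Theorem \ref{equivalent properties for bounded polynomials}, applied with $E$ replaced by $(E^\sim)^\sim_n$ and $P$ by $\bar P|_{(E^\sim)^\sim_n}$, yields the implication $(2)\Rightarrow(3)$: orthogonal additivity on the positive cone forces $\check{\bar P}|_{(E^\sim)^\sim_n}$ to be orthosymmetric. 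Translating back, the restriction of $T^{[s+1]*}$ to $(E^\sim)^\sim_n\times\cdots\times(E^\sim)^\sim_n$ is orthosymmetric, which is the assertion.

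The main obstacle I anticipate is the bookkeeping identifying the restriction of $T^{[s+1]*}$ with the symmetric $s$-linear map $\check{\bar P}|_{(E^\sim)^\sim_n}$ — i.e. making precise that the Arens extension of the polarization equals the polarization of the Arens extension, and that symmetry of $T$ is inherited by $T^{[s+1]*}$ (Arens adjoints are famously non-commutative in the bilinear case, so one must check that full orthosymmetry, not merely positivity, is what licenses the identification, presumably again through Corollary \ref{os maps are sym} applied to $T^{[s+1]*}$ once its order boundedness of variation is known). I would also need to confirm that $(E^\sim)^\sim_n$ is Archimedean so that Theorem \ref{equivalent properties for bounded polynomials} is applicable to it; this is standard. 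Everything else is a direct citation of Lemma \ref{Arens OA Extension} and Theorem \ref{equivalent properties for bounded polynomials}.
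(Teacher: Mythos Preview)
Your proposal is correct and matches the paper's proof essentially step for step: define $P_T(x)=T(x,\dots,x)$, use Corollary~\ref{os maps are sym} to get symmetry of $T$, use Lemma~\ref{Arens OA Extension} to get orthogonal additivity of $\overline{P_T}$ on the positive cone of $(E^\sim)^\sim_n$, apply Theorem~\ref{equivalent properties for bounded polynomials} to conclude $(\overline{P_T})\check{\vphantom{P}}$ is orthosymmetric, and then identify this map with the restriction of $T^{[s+1]*}$. The obstacle you flag---that symmetry of $T$ must be shown to pass to the restriction of $T^{[s+1]*}$ to $(E^\sim)^\sim_n\times\cdots\times(E^\sim)^\sim_n$ so that the polarization identification goes through---is precisely the point the paper handles by citing Theorem~3.4 of \cite{buskes_roberts_arensextensions}, rather than by reapplying Corollary~\ref{os maps are sym} to $T^{[s+1]*}$ as you tentatively suggest.
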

\begin{proof} 
Let $P_T$ be the s-homogeneous polynomial that is generated by $T$. That is, $P_T(x) = T(x,...,x)$ for all $x \in E$. For convenience, we also use $\overline{P_T}$ to denote the restriction of $\overline{P_T}$ to $(E^\sim)_n^\sim$.  Since $P_T$ is an orthogonally additive polynomial of order bounded variation, Lemma \ref{Arens OA Extension} implies $\overline{P_T}$ is orthogonally additive. Then Theorem \ref{equivalent properties for bounded polynomials} implies $(\overline{P_T})\check{\vphantom{P}}$ , the unique symmetric s-linear map that generates  $\overline{P_T}$, is orthosymmetric. In addition, Corollary \ref{os maps are sym} implies $T$ is symmetric. Hence, $T^{[s+1]*}$ restricted to $(E^\sim)^\sim_n \times ... \times (E^\sim)^\sim_n $ is symmetric (Theorem 3.4 of \cite{buskes_roberts_arensextensions}). It follows from the uniqueness of $(\overline{P_T})\check{\vphantom{P}}$  that  $T^{[s+1]*} = (\overline{P_T})\check{\vphantom{P}}$ on $(E^\sim)^\sim_n \times ... \times (E^\sim)^\sim_n $. We conclude  $T^{[s+1]*}$ restricted to $(E^\sim)^\sim_n \times ... \times (E^\sim)^\sim_n $ is orthosymmetric.
\end{proof}

\newpage
\bibliographystyle{amsplain}
\bibliography{myreferences}
\end{document}